\newtheorem{theorem}{Theorem}[section]
\newtheorem{lemma}[theorem]{Lemma}
\theoremstyle{definition}
\newtheorem{definition}[theorem]{Definition}
\theoremstyle{remark}
\numberwithin{equation}{section}
\begin{document}

\title{Efficient computation of the Gr\"{u}nwald-Letnikov fractional diffusion derivative using adaptive time step memory}
\author{Christopher L. MacDonald, Nirupama Bhattacharya,\\Brian P. Sprouse, and Gabriel A. Silva\footnote{ Corresponding author: Prof. GA Silva\newline UC San Diego Jacobs Retina Center\newline 9415 Campus Point Drive\newline La Jolla, California 92037-0946\newline \newline Email: gsilva@ucsd.edu\newline\newline CLM and NB contributed equally.}\\\\Department of Bioengineeirng\\University of California, San Diego}
\date{}
\maketitle

\begin{abstract}
Computing numerical solutions to fractional differential equations can be computationally intensive due to the effect of non-local derivatives in which all previous time points contribute to the current iteration. In general, numerical approaches that depend on truncating part of the system history while efficient, can suffer from high degrees of error and inaccuracy. Here we present an adaptive time step memory method for smooth functions applied to the Gr\"{u}nwald-Letnikov fractional diffusion derivative. This method is computationally efficient and results in smaller errors during numerical simulations. Sampled points along the system's history at progressively longer intervals are assumed to reflect the values of neighboring time points. By including progressively fewer points backward in time, a temporally `weighted' history is computed that includes contributions from the entire past of the system, maintaining accuracy, but with fewer points actually calculated, greatly improving computational efficiency.
\end{abstract}

\newpage
\section{Introduction}
\label{sec:intro}
Fractional differential equations are an extension of integrals and derivatives of integer order.  Over the last few decades fractional differential equations have played an increasing role in  physics \cite{PhysRevE.55.6821,fellah2002application,metzler1999anomalous,soczkffiwicz2002application}, chemistry \cite{gorenflo2002time,seki2003fractional}, and engineering \cite{cushman36fractional,baeumer2001subordinated,mathieu2003fractional}.  
In particular, fractional calculus methods offer unique approaches for modeling complex and dynamic processes at molecular and cellular scales in biological and physiological systems, and there has been a recent interest in the use of fractional calculus methods in bioengineering and biophysical modeling \cite{Magin:2004p3137,Magin:2006uo}. These methods have been applied to modeling ultrasonic wave propagation in cancellous bone \cite{sebaa2006application}, bio-electrode behavior at the cardiac tissue interface \cite{magin2008modeling}, and describing spiny dendrite neurons using a fractional cable equation \cite{henry2008fractional}. Other work has shown that endogenous lipid granules within yeast exhibit anomalous subdiffusion during mitotic cell division \cite{selhuber2009variety}.

Here we investigate the numerical implementation and computational performance of a fractional reaction-diffusion equation. The continuous diffusion equation has been the focal point of transport modeling in physical and biological systems for over a hundred years, since first proposed by Adolf Fick in 1855. The practical motivation for the present work arose from considering cell signaling data between Muller neural glial cells in the neural sensory retina. The neural retina is a direct extension and part of the brain itself. Muller cells in the retina can communicate in a paracrine fashion by secreting adenosine triphosphate (ATP) that diffuses into the extracellular space that then triggers intracellular calcium waves. In some cases these signaling events can produce long range regenerative intercellular signaling in networks of cells; see \cite{Newman:2001p1009,Metea:2006bz,Yu:2009p3681}. In particular, from experimental data published in \cite{Newman:2001p1009}, we qualitatively observed a peaked diffusion profile for ATP where the central cusp-like shape persisted over a longer period of time than would be expected for Gaussian diffusion; this is a typical characteristic of anomalous subdiffusion \cite{metzler2000random}. Therefore we suspected that the diffusion of ATP in this physiological system was subdiffusive in nature. Neurobiologically, this could ultimately have an important effect on underlying physiology and cellular signaling. Motivated by these observations, we decided to explore ways of simulating and exploring such subdiffusive dynamics using numerical methods amenable to experimental methods and data. Modeling subdiffusive neurophysiological transport processes necessitates the use of fractional differential equations or other related objects. However, the numerical implementation of such equations involves non-local fractional derivative operators that require taking into account the entire past history of the system in order to compute the state of the system at the current time step. This produces a significant computational burden that limits the practicality of the models. In the present paper we introduce an `adaptive time step memory' approach for computing the contribution of the memory effect associated with the history of a system in a way that makes it both numerically and resource (i.e. computer memory) efficient and robust while maintaining accuracy. While our algorithms can be applied to any diffusion application modeled as a fractional process, they offer particular value for modeling complex processes with long histories, such as lengthy molecular or cellular simulations.

Approaches for numerically approximating solutions to fractional diffusion equations have been extensively studied \cite{chen2008finite,langlands2005accuracy,liu2007stability,mainardi2006sub,mclean2009convergence,podlubny2009matrix,yuste2003explicit}, but in general there is always a trade off between computational efficiency, complexity, and the accuracy of the resultant approximations. There has also been a considerable amount of work done on developing fast convolution quadrature algorithms, which is relevant to fractional differential equations because the non-local nature of fractional calculus operators results in either a continuous or discrete convolution of some form. In particular, Lubich and others \cite{Lubich-disc-fraccal1986,Lubich-ConvQuadDiscOperationalCalcI,Lubich-NonreflectingBoundary2002,Lubich-FastOblConvQuad2006,Lubich-AdaptiveFastOblConvEquations2008} have built a generalized and broadly applicable convolution quadrature framework to numerically approximate a continuous convolution integral with a wide array of possible convolution kernel functions (including oscillatory kernels, singular kernels, kernels with multiple time scales, and unknown kernels with known Laplace transforms), while achieving good algorithmic performance with respect to complexity and memory requirements. However, their algorithms are necessarily very complicated in order to handle a wide range of functions and expressions in the convolution while limiting storage requirements and scaling of arithmetic operations. They involve approximating a continuous convolution based on numerical inversions of the Laplace transform of the convolution kernel function using contour integrals discretized along appropriately chosen Talbot contours or hyperbolas. The scaling of the number of required arithmetic operations involved in the convolution, and overall memory requirements, are both reduced by splitting up the continuous convolution integral or discrete convolution quadrature summation into a carefully chosen series of smaller integrals or summations, and solving a set of ordinary differential equations one time step at a time without storing the entire history of the solutions. For methods explicitly involving a discrete convolution summation, the quadrature weights are calculated using the Laplace transform of the original kernel and linear multistep methods for solving differential equations. FFT techniques can be applied to calculate these weights simultaneously and further increase the efficiency of the algorithm \cite{Lubich-ConvQuadDiscOperationalCalcII}. These authors have demonstrated the success of their framework by simulating various differential equations involving convolutions, including a one-dimensional fractional diffusion equation \cite{Lubich-FastOblConvQuad2006}. In \cite{Li-FastTimeStepFracIntegrals2010}, Li takes an alternative approach from this framework and focuses on a fast time-stepping algorithm specifically for fractional integrals by constructing an efficient $Q$ point quadrature, but does not focus on how this fits into numerical algorithms representing larger mathematical models.

The methods we introduce here are also efficient, significantly reducing simulation times, computational overhead, and required memory resources, without significantly affecting the computed accuracy of the final solution. However, in contrast to broad generalized frameworks, our algorithms are focused on solving fractional differential equations involving nonlocal fractional derivative operators. We approximate the fractional derivative based on the Gr\"{u}nwald-Letnikov definition instead of pursuing a general quadrature formula approximation to the Riemann-Liouville integral definition of the fractional derivative. The Gr\"{u}nwald-Letnikov derivative is used in many numerical schemes for discretizing fractional diffusion equations from a continuous Riemann-Liouville approach \cite{chen2009numerical,yuste2003explicit,podlubny1999fractional} and involves a discrete convolution between a `weight' or coefficient function and the function for which we are interested in taking the derivative. The mathematics and theory of this form of a weighting function are well established in the literature \cite{podlubny1999fractional,Lubich-disc-fraccal1986}. Building on this foundation avoids the need for domain transformations, contour integration or involved theory. Our algorithms were specifically developed for real world applied diffusion and transport physics and engineering problems, often where there are practical measurement limitations to the types and quality (e.g. granularity) of the data that can be collected or observed. For situations such as these, our methods for handling discrete convolutions associated with fractional derivatives are more intuitive and accessible than other generalized mathematical methods, where it is often not obvious or clear how to implement and apply a generalized approach to a specific physical problem under a defined set of constraints. The approaches we introduce here can be incorporated with various combinations of finite difference spatial discretizations and time marching schemes of a larger mathematical model in a straightforward way.

The increased efficiency and memory savings in the approaches we describe here lie in the way the discrete summation is calculated. The conceptual basis that results in a saving of computational time without a huge tradeoff in accuracy is in the interpretation of the ‘weight’ function as a measure of the importance of the history of a system. The more recent history of the system is more important in determining the future state of the system, and therefore we make use of an `adaptive time step memory' method by changing the interval of the backwards time summation in the Gr\"{u}nwald-Letnikov fractional derivative. Instead of incorporating every previous time point into the most current calculation, only certain points are recalled based upon their proximity to the current point in time, weighted according to the sparsity of the time points. This substantially reduces the computational overhead needed to recalculate the prior history at every time step, yet maintains a high level of accuracy compared to other approximation methods that make use of the Gr\"{u}nwald-Letnikov definition. We consider two adaptive step approaches - one based on an arithmetic sequence and another based on a power law, that when combined with a linked-list computational data structure approach yield $\mathcal O(log_2(N))$ active memory requirements. This is a significant improvement over keeping all $N$ steps of the system's history. We compare our adaptive time step approach with the `short memory' method described by Volterra \cite{volterra1931} and Podlubny et al \cite{podlubny1999fractional}, and examine differences in simulation times and errors under similar conditions for both. In the last section we sketch out a `smart' adaptive memory method based on a continuous form of the Gr\"{u}nwald-Letnikov that will be able to accommodate more dynamic past histories, i.e., histories with sharp and abrupt changes relative to the time step being considered, that produce larger error in the discrete methods we discuss. Finally, we note that the scope of the present work focuses on a detailed development of the methods and algorithms. A full theoretical analysis of stability and algorithmic complexity is the topic of a companion paper that will follow this one.

\section{Fractional diffusion and the Gr\"{u}nwald-Letnikov derivative}

\subsection{Derivation of the fractional diffusion equation}
\label{sec:fracdiffderivation}
We begin by considering the standard diffusion equation with no reaction term

\begin{equation}
\label{eq:regulardiff}
\frac{\partial{u}}{\partial{t}}=\alpha_s\nabla^2u,
\end{equation}

\noindent  where $\alpha_{s}$ is the standard diffusion coefficient given in
units $\mbox{distance}^{2}/\mbox{time}$.  The time-fractional version of equation \ref{eq:regulardiff} is given by

\begin{equation}
\label{eq:fracdiffeq}
\frac{\partial{^{\gamma}u}}{\partial{t^\gamma}}=\alpha\nabla^2u.
\end{equation}

\noindent where here the diffusion coefficient $\alpha$ has fractional order units of $\mbox{distance}^{2}/\mbox{time}^{\gamma}$. This is the simplest form of the fractional diffusion equation.  The real values taken by $\gamma$ in equation \ref{fracdiffeq} dictate the diffusion regimes obtained by the equation.  Values of $0<\gamma<1$ result in subdiffusion, characterized by a slow diffusion profile relative to regular diffusion. When $\gamma>1$, an oscillatory component emerges in the diffusion profile resulting in superdiffusion.  As required, when $\gamma=1$ the fractional diffusion equation reduces to standard diffusion that results in a Gaussian profile.  When $\gamma=2$ it reduces to the standard wave equation. As a point of discussion, we note that there no obvious theoretical (mathematical) limits to the value  $\gamma$ can take. However, there necessarily exist practical limits on $\gamma$ when considering stability and maintaining order of accuracy for the numerical implementation and algorithms of the fractional differential equation. Here we only concern ourselves with a physically plausible interpretation behind the equation. We know what subdiffusion and superdiffusion correspond to physically, but, so far as we are aware, there is no straight forward meaning of equation \ref{eq:fracdiffeq} when $\gamma > 2$ that we are aware of. This may be a shortcoming of our current physical understanding in the sense that the physics underlying such cases are not yet well understood or have not yet been discovered to be connected to particular real world dynamical processes. So although theoretically there is no apparent limit, there is a practical numerical implementation limit that will depend on a desired order of accuracy and computational efficiency. As we eluded to in the Introduction, we leave a full theoretical analysis of the stability and complexity of our methods to a follow up companion paper.

Using the relation
\begin{equation}
\frac{\partial{u}}{\partial{t}}=\frac{\partial{^{1-\gamma}}}{\partial{t^{1-\gamma}}}\frac{\partial^\gamma u}{\partial t^\gamma}
\end{equation}
and substituting into equation \ref{fracdiffeq} yields

\begin{subequations}\label{eq:diffusioneq}
\begin{equation}
\frac{\partial{u}}{\partial{t}}=\frac{\partial{^{1-\gamma}}}{\partial{t^{1-\gamma}}}\alpha\nabla^2u
\end{equation}
\begin{equation}
\label{eq:fracdiffnoreact}
\frac{\partial{u}}{\partial{t}}=\alpha D^{1-\gamma}\nabla^2u
\end{equation}
\end{subequations}
where $D^{1-\gamma}$ denotes the fractional derivative operator which we define and discuss at length in the next section.

If we take into account consumption or generation processes, then we can rewrite the diffusion equation as

\begin{equation}
\label{eq:fracdifffull}
\frac{\partial{u}(\vec x,t)}{\partial{t}}={\alpha}D^{1-\gamma}\nabla^2u(\vec x,t)+f(u), ~~~\vec x=\left\{x_1,x_2...x_N\right\}, ~~~u\left(\vec{x},0\right)=u_{0}\left(\vec{x}\right)
\end{equation}

\noindent where $\vec x$ is an $N$ dimensional vector and \emph{f(u)} represents a consumption or generation term. For the specific examples we describe in this paper we implement the simplest Dirichlet boundary conditions where all boundaries are set to zero. However, our method can handle various other boundary conditions, including Dirichlet conditions set to any values, Neumann conditions, and others, in a straight forward manner. In the following section we consider the fractional derivative operator $D^{1-\gamma}$.

\subsection{Definition of the Gr\"{u}nwald-Letnikov derivative}
The $a^{th}$ order fractional derivative of a function, denoted by $D^{a}f$, extends the order of the differential operator from the set of integers to the set of real numbers. The operator can be mathematically defined in several ways, including standard descriptions like the Riemann-Liouville definition, the Gr\"{u}nwald-Letnikov definition, and others. For numerical simulations, we find the Gr\"{u}nwald-Letnikov derivative to be convenient, since it is based on the standard differential operator but made applicable to arbitrary order $a$ with a discrete summation and binomial coefficient term:

\begin{equation}
\label{eq:naturalGrunwald}
D^{a}f(x)=\lim_{h\rightarrow0}h^{-a}\sum_{m=0}^{x/h}(-1)^{m} {a\choose m}f(x-mh).
\end{equation}

\noindent \\Expanding the binomial coefficient yields

\begin{equation}
 {a\choose m}=\frac{a!}{m!(n-m)!}
\end{equation}

\noindent \\and expressing the factorial components of the binomial coefficient by the gamma function gives

\begin{equation} 
{a\choose m}=\frac{\Gamma(a+1)}{m!\Gamma(a+1-m)}.  \label{eq:gammabi}
\end{equation}

\noindent Combining equations \ref{eq:naturalGrunwald} and \ref{eq:gammabi} yields the Gr\"{u}nwald-Letnikov derivative for real numbers:

\begin{equation} \label{eq:realGrunwald}
D^{a}f(x)=\lim_{h\rightarrow0}h^{-a}\sum_{m=0}^{x/h}\frac{(-1)^{m}\Gamma(a+1)}{m!\Gamma(a+1-m)}f(x-mh)~~~~~a\in\mathbb{R},~a\neq-\mathbb{N}_1.
\end{equation}
\\
\indent It should be noted that computation of the Gr\"{u}nwald-Letnikov derivative requires knowledge of the entire past history of the system due to the summation operator. This means that this fractional derivative, unlike standard derivatives, is no longer local but global.  Computing the value of a real derivative requires knowledge of the system's past history. The derivative must take into account all past points from $m=0$, the current point, all the way back to the beginning point $m=x/h$. This requirement places significant computational demands on the numerical implementation of the Gr\"{u}nwald-Letnikov derivative, and is the principle technical motivation for the results presented here. 

In the rest of the paper we explore the implementation of equation \ref{eq:realGrunwald} to a diffusion regime constrained to $0<\gamma \equiv a \leq2$ (as explained in section \ref{sec:fracdiffderivation}): 

\begin{equation}
\label{eq:grunwaldlet}
D^{\gamma}f(x)=\lim_{h\rightarrow0}h^{-\gamma}\sum_{m=0}^{x/h} \frac{(-1)^{m}\Gamma(\gamma+1)}{m!\Gamma(\gamma+1-m)}f(x-mh) \text{ for }0<\gamma\leq2.
\end{equation}

Applying equation \ref{eq:grunwaldlet} to the fractional diffusion equation (eq. \ref{eq:fracdiffnoreact}) yields:  

\begin{equation}
 \frac{\partial{u}}{\partial{t}}=\lim_{\tau\rightarrow0}\tau^{\gamma-1}\alpha\sum_{m=0}^{t/\tau}\frac{(-1)^m\Gamma(2-\gamma)}{m!\Gamma(2-\gamma-m)} \nabla^2u(t-m\tau)
\label{eq:grunlet_equation}
\end{equation}

\noindent\\ where $t$ represents instantaneous time, and $\tau$ is the time step.

\noindent\\ Next, define a function $\psi(\gamma,m)$ such that

\begin{equation}
\label{eq:psidefinition}
\psi(\gamma,m)=\frac{(-1)^m\Gamma(2-\gamma)}{m!\Gamma(2-\gamma-m)}.
\end{equation}

\noindent\\ Given that the standard definition of the gamma function implies $\Gamma(a)=\frac{1}{a}\Gamma(a+1)$, equation \ref{eq:psidefinition} can be reduced to a recursive relationship.  Consider the first four terms in the chain from $m=0$ to $m=3$:

\begin{eqnarray*}
\psi(\gamma,0)&=&~~~~~\frac{(-1)^{0}\cdot\Gamma(2-\gamma)}{(0!)\cdot\Gamma(2-\gamma-0)}=1\nonumber\\
\psi(\gamma,1)&=&~~~~~\frac{(-1)^{1}\cdot\Gamma(2-\gamma)}{(1!)\cdot\Gamma(2-\gamma-1)}=\frac{(-1)\cdot(-1)^{0}\cdot\Gamma(2-\gamma)}{(1)\cdot(0!)\cdot\frac{1}{(2-\gamma-1)}\cdot\Gamma(2-\gamma-0)}\nonumber\\
\psi(\gamma,2)&=&~~~~~\frac{(-1)^{2}\cdot\Gamma(2-\gamma)}{(2!)\cdot\Gamma(2-\gamma-2)}=\frac{(-1)\cdot(-1)^{1}\cdot\Gamma(2-\gamma)}{(2)\cdot(1!)\cdot\frac{1}{(2-\gamma-2)}\cdot\Gamma(2-\gamma-1)}\nonumber\\
\psi(\gamma,3)&=&~~~~~\frac{(-1)^{3}\cdot\Gamma(2-\gamma)}{(3!)\cdot\Gamma(2-\gamma-3)}=\frac{(-1)\cdot(-1)^{2}\cdot\Gamma(2-\gamma)}{(3)\cdot(2!)\cdot\frac{1}{(2-\gamma-3)}\cdot\Gamma(2-\gamma-2)}\nonumber\\
\end{eqnarray*}

\noindent Examining this system of equations suggests that for any $\psi(\gamma,m)$:

\begin{equation}
 \psi(\gamma,m)=\frac{(-1)^{m}\cdot\Gamma(2-\gamma)}{(m!)\cdot\Gamma(2-\gamma-m)}=\frac{(-1)\cdot(-1)^{(m-1)}\cdot\Gamma(2-\gamma)}{(m)\cdot(m-1)!\cdot\frac{1}{(2-\gamma-m)}\cdot\Gamma(2-\gamma-(m-1))}.
\end{equation}

\noindent 
Because the function $\psi(\gamma,m)$ is dependent on $\psi(\gamma,m-1)$, an iterative relationship forms that scales by $\frac{-(2-\gamma-m)}{m}$:

\begin{equation}
\label{eq:iterativepsi}
 \psi(\gamma,m)=-\psi(\gamma,m-1)\frac{2-\gamma-m}{m}.
\end{equation}

This recursive function is valid for all $\gamma$ including subdiffusion, standard diffusion and superdiffusion, so this equation is general over all regimes. Because the entire history of the system must be taken into account when computing the current time step, $\psi(\gamma,m)$ is used for many $m$ values, many times over the course of the simulation, and can be precomputed for values of $m=0$ to $m=N$, where $N$ is the total number of time points, resulting in a significant savings in computational performance. A similar simplification has been previously discussed and used in \cite{podlubny1999fractional}.  Taking $\psi(\gamma,m)$ into account yields the final form of the fractional diffusion equation, given by

\begin{equation}
\label{eq:grunwald-psieq}
\frac{\partial{u}}{\partial{t}}=\lim_{\tau\rightarrow0}\tau^{\gamma-1}\alpha\sum_{m=0}^{t/\tau}\psi(\gamma,m) \nabla^2u(t-m\tau).
\end{equation}

We can also arrive at equation \ref{eq:grunwald-psieq} by an alternative approach. Begin by considering again the Gr\"{u}nwald-Letnikov derivative in terms of binomial notation:

\begin{equation}
D^{1-\gamma}f(t)=\lim_{\tau\rightarrow0}\tau^{\gamma-1}\sum_{m=0}^{t/\tau}(-1)^m {{1-\gamma}\choose m}f(t-m\tau).
\label{eq:grunletdefbi}
\end{equation}

\noindent Applying eq. \ref{eq:grunletdefbi} to the fractional diffusion equation (eq. \ref{eq:fracdiffnoreact}) yields

\begin{equation}
 \frac{\partial{u}}{\partial{t}}=\lim_{\tau\rightarrow0}\tau^{\gamma-1}\alpha\sum_{m=0}^{t/\tau}(-1)^m {1-\gamma\choose m} \nabla^2u(\vec x, t-m\tau)
\label{eq:grunlet_equation2}
\end{equation}
Knowing the gamma function definition of the factorial operator, $a!=\Gamma(a+1)$, the binomial coefficient can be expanded to accommodate real numbers (also \emph{c.f.} equation \ref{eq:gammabi}):

\begin{equation}
\label{eq:binom}
 {{1-\gamma}\choose m}=\frac{(1-\gamma)!}{m!(1-\gamma-m)!}=\frac{\Gamma(2-\gamma)}{m!\Gamma(2-\gamma-m)}.
\end{equation}
Note how combining equations \ref{eq:grunlet_equation2} and \ref{eq:binom} yields equation \ref{eq:grunlet_equation} as required. 

Next, defining the function $\psi(\gamma,m)$ (equation \ref{eq:psidefinition}) using binomial notation gives
\begin{equation}
\label{psidefinition}
 \psi(\gamma,m)=(-1)^m {1-\gamma\choose m}
\end{equation}
and substituting the relation $\Gamma(a)=(a-1)\Gamma(a-1)$ into eq. \ref{eq:binom} results in

\begin{equation}
 \psi(\gamma,m)=\frac{(-1)^{m-1}\Gamma(2-\gamma)}{(m-1)!\Gamma(2-\gamma-(m-1))}\frac{-1}{m\frac{1}{2-\gamma-m}}
\end{equation}

\noindent yielding the iterative relationship in equation \ref{eq:iterativepsi} above:

\begin{equation} \nonumber
 \psi(\gamma,m)=-\psi(\gamma,m-1)\frac{2-\gamma-m}{m}.
\end{equation}
With the substitution of $\psi(\gamma,m)$ into eq. \ref{eq:grunlet_equation2} we once again recover the time-fractional diffusion equation in the form

\begin{equation} \label{eq:fulldiff}
\frac{\partial{u}}{\partial{t}}=\lim_{\tau\rightarrow0}\tau^{\gamma-1}\alpha\sum_{m=0}^{t/\tau}\psi(\gamma,m) \nabla^2u(\vec x, t-m\tau).
\end{equation}

\subsection{Discretization of the fractional diffusion equation.}
Using 
\begin{equation}
 \nabla^2u(\vec x, t-m\tau)=\frac{\partial^2u(\vec x,t-m\tau)}{\partial x_1^2}+\frac{\partial^2u(\vec x,t-m\tau)}{\partial x_2^2}
\end{equation}
as the formulation in two spatial dimensions, one can discretize the function into a finite difference based FTCS scheme (forward time centered space) on a grid $u_{j,l}^k$ (where $k=t/\Delta_t,j=x_1/\Delta_{x},l=x_2/\Delta_{x}$, $\Delta_x$ is the grid spacing in both directions assuming an equally spaced grid, and $\Delta_t$ is the time step), using the relations \cite{press1992numerical}

\begin{eqnarray} \label{eq:ftcs}
\frac{\partial^2u(\vec x,t-m\tau)}{\partial x_1^2}&=&\frac{u_{j+1,l}^{k-m}-2u_{j,l}^{k-m}+u_{j-1,l}^{k-m}}{\Delta_{x}^2}\nonumber \\
\frac{\partial^2u(\vec x,t-m\tau)}{\partial x_2^2}&=&\frac{u_{j,l+1}^{k-m}-2u_{j,l}^{k-m}+u_{j,l-1}^{k-m}}{\Delta_{x}^2}\nonumber \\
\frac{\partial u(\vec x,t)}{\partial t}&=&\frac{u^{k+1}_{j,l}-u^{k}_{j,l}}{\Delta_t}.
\end{eqnarray}

In the discrete limit where $\tau\rightarrow\Delta_t$, we approximate the fractional diffusion equation (equations \ref{eq:grunwald-psieq} and \ref{eq:fulldiff}) with the following finite difference expression, which has a first-order approximation $O(\tau)$ (Chapter 7 in \cite{podlubny1999fractional}, \cite{Lubich-disc-fraccal1986}):

\begin{equation}
\label{frac_norxn}
\frac{u^{k+1}_{j,l}-u^{k}_{j,l}}{\Delta_t}=\alpha\frac{\Delta_t^{\gamma-1}}{\Delta_{x}^2}\sum_{m=0}^k\psi(\gamma,m)\delta^{k-m}_{j,l}
\end{equation}

\noindent where $\delta^{k-m}_{j,l}$ is the finite difference kernel given by

\begin{equation}
\label{eq:deltasimp}
\delta^{k-m}_{j,l}=\left(u_{j+1,l}^{k-m}+u_{j-1,l}^{k-m}-4u_{j,l}^{k-m}+u_{j,l+1}^{k-m}+u_{j,l-1}^{k-m}\right).
\end{equation}

Adding a consumption/generation term is straightforward in this implementation. For example, take an exponential decay term given by

\begin{equation}
\label{expdecay}
 \frac{\partial{u}}{\partial{t}}=-\beta u
\end{equation}
with the complementary finite difference relation

\begin{equation}\label{eq:findif}
 \frac{u^{k+1}_{j,l}-u^{k}_{j,l}}{\Delta_t}=-\beta u_{j,l}^{k}.
\end{equation}
Incorporating eq. \ref{expdecay} into the form of eq. \ref{eq:fracdifffull} results in 

\begin{equation}
 \frac{\partial{u}}{\partial{t}}={\alpha}D_{t}^{1-\gamma}\nabla^2u-\beta u,
\end{equation}
which gives the full finite difference implementation in two dimensions

\begin{equation}
\label{eq:frac_final}
\frac{u^{k+1}_{j,l}-u^{k}_{j,l}}{\Delta_t}=\alpha\frac{\Delta_t^{\gamma-1}}{\Delta_x^2}\sum_{m=0}^k\psi(\gamma,m)\delta^{k-m}_{j,l} -\beta u_{j,l}^{k}
\end{equation}

\begin{figure}
\begin{center}
\includegraphics[width=5in]{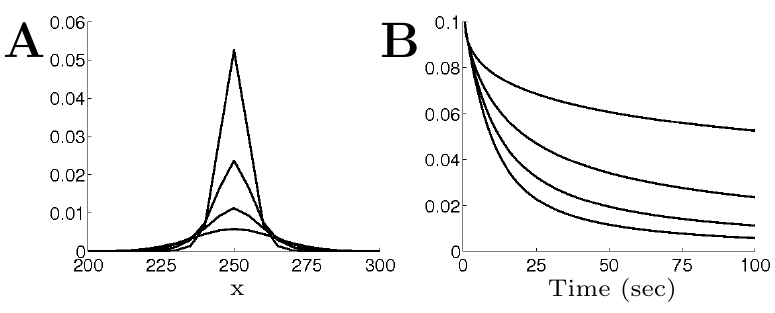}    
\caption{Simulation results for $\gamma=0.5,~0.75,~0.9,~1.0$ (for traces from top to bottom) in one dimensional space (panel A) and time (panel B). As the subdiffusion regime is approached the profile becomes more and more hypergaussian.} 
\label{diffresults}
\end{center}
\end{figure}

Figure \ref{diffresults} shows the results of four simulations for  different values of $\gamma$. Simulations were run on a 100 x 100 grid with initial conditions $U_{50,50}^0$ = 0.1, $U_{51,50}^0=U_{50,51}^0=U_{49,50}^0=U_{50,49}^0$ = 0.05 and zero elsewhere. Dirichlet boundary conditions were implemented and the simulation edge set to zero. Simulations were run for $t=100$ seconds with $\alpha=1$, $\beta=0$, and $\Delta_x=5$.

\section{Adaptive time step memory as an arithmetic sequence}
\subsection{Derivation}
In eq. \ref{eq:frac_final} each iteration requires the re-calculation and summation of every previous time point convolved with $\psi(\gamma,m)$.  This becomes increasingly cumbersome for large times, which require significant numbers of computations and memory storage requirements. To address this, Podlubny et. al \cite{podlubny1999fractional} introduced the `short memory' principle which assumes that for a large $t$ the role of the previous steps or `history' of the equation become less and less important as the convolved $\psi(\gamma,m)$ shrinks towards zero. This would then result in approximating eq. \ref{eq:frac_final} by truncating the backwards summation, only taking into account times on the interval $[t-L,t]$ instead of $[0,t]$, where $L$ is defined as the `memory length' (eq. 7.4 in \cite{podlubny1999fractional}; Fig. \ref{adaptivetime}). While computationally efficient, this approach leads to errors in the final solution since not all points are counted in the summation. Despite the resultant errors, this numerical method represents a powerful and simple approach for providing a reasonable trade off between computational overhead and numerical accuracy. In the context of the implementation derived here, it would result in a discretization scheme given by 

\begin{equation}
\label{frac_shortmemory}
\frac{u^{k+1}_{j,l}-u^{k}_{j,l}}{\Delta_t}=\alpha\frac{\Delta_t^{\gamma-1}}{\Delta_x^2}\sum_{m=0}^{min(L/\Delta_t,k)}\psi(\gamma,m)\delta^{k-m}_{j,l} -\beta u_{j,l}^{k}
\end{equation}

As an alternative to the method of Podlubny, Ford and Simpson proposed an alternative `nested mesh' variant that gives a good approximation to the true solution at a reasonable computational cost \cite{ford2001numerical}.  However, this method is exclusive to the Caputo fractional derivative. Here we introduce an approach that is applicable to the more general Gr\"{u}nwald-Letnikov definition.  Like these other methods, it also shortens computational times but at the same time results in much greater accuracy than the use of `short memory.'  We achieve this by introducing the concept of an `adaptive memory' into the Gr\"{u}nwald-Letnikov discretization.  

The underlying principle of the adaptive time approach is that relative to the current time point previous time points contribute different amounts to the summation. Values relatively closer to the current time point will have a greater contribution to the current numerical calculation than values many time points back due to the multiplier $\psi(\gamma,m)$.  For smooth functions, as $m$ increases and $\left|\psi(\gamma,m)\right|$ decreases, neighboring points in the summation exhibit only small differences. Consequently, one can take advantage of this and utilize an `adaptive memory' approach in which neighboring values at prior time points are grouped together in defined increments and the median taken as a representative contribution for the increment weighted according to the length of the increment to account for the skipped points. This results in fewer time points that need to be computed in a summation. Algorithmically, for an arbitrary number (define this as parameter $a$) of time steps back from the current time point $k$ for which the history of the system is being computed, consider an initial interval $[0,a]$ for which all time points within this interval are used in the summation and therefore contribute to the the Gr\"{u}nwald-Letnikov discretization. Let subsequent time intervals become longer the further away from $k$ they are and within them only every other $d$ time points are used in the summation term, i.e., only the median values of a temporally shifting calculation increment of length $d$ along the current interval are considered. As subsequent intervals become longer, so does $d$, thereby requiring less points to compute (Fig. \ref{adaptivetime}). 

\begin{definition} Let $k$ be the current iterative time step in a reaction diffusion process for which a Gr\"{u}nwald-Letnikov discretization is being computed. Consider an arbitrary time point $a$ in the history of the system backwards from $k$. For $i\in \mathbb{N}_1,i\neq 1$, define an interval of this past history by
\begin{equation}\label{eq:interval}
I=[a^{i-1}+i,a^i]
\end{equation}
where $\mathbb{N}_1$ represents the set of natural numbers beginning from one.  Given how the indices $i$ are defined, the very first interval backwards from $k$ is independent of equation \ref{eq:interval} and is given by $[0,a]$. This is considered the base interval. Subsequent intervals are defined as a function of this base, i.e., as a function of $a$ and determined by eq. \ref{eq:interval}. Let $i_{max}$ be the value of $i$ such that $k \in I_{max}=[a^{i_{max}-1}+i_{max},a^{i_{max}}]$. The complete set of intervals then is defined as $\zeta=\{I=[a^{i-1}+i,a^i]:i\in \mathbb{N}_1,i\neq 1, i\le i_{max}\}$. 
\end{definition}
\begin{definition}
For the set of intervals $\zeta$ as defined in Definition 3.1, $D=\{d=2i-1:i\in \mathbb{N}_1,i\neq 1\}$ is the set of distances $d$ by which the corresponding intervals in $\zeta$ are sampled at.
\end{definition}
\begin{theorem}
In general, for two-dimensional diffusion without  consumption or generation terms for any interval as defined in Definition 3.1, the Gr\"{u}nwald-Letnikov discretization with adaptive time step memory is given by 
\begin{align}\label{eq:adapttime}
& \frac{u^{k+1}_{j,l}-u^{k}_{j,l}}{\Delta_t}=\alpha\frac{\Delta_t^{\gamma-1}}{\Delta_x^2}\Bigg[\sum_{n=0}^{a}\psi(\gamma,n)\delta^{k-n}_{j,l}+\cdots \\ \nonumber
&\sum_{i=2}^{i_{max}}\sum_{m_i=a^{i-1}+i}^{a^i}(2i-1)\psi(\gamma,m_i)\delta^{k-m_i}_{j,l}+\sum_{p=m_{max}+{i_{max}}}^{k}\psi(\gamma,p)\delta^{k-p}_{j,l}\Bigg]
\end{align}
where $p \in \mathbb{N}_1$, and for each $i$ (i.e. for each interval) $M=\{m_i=a^{i-1}+(2i-1)\eta-i+1:\eta\in\mathbb{N}_1~\&~m_i\le m_{max}\}$ is the set of time points over which $\psi(\gamma,m)\delta^{k-m}_{j,l}$ is evaluated. Since the time point $k$ may be less than the full length of the last interval $I_{max}$, $|m_{max}| \le |k-i_{max}|$ represents the maximum value in $I_{max}$ that is evaluated, i.e. the last element in the set $M$ for $I_{max}$.
\end{theorem}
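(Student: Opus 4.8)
The plan is to derive \ref{eq:adapttime} constructively from the exact Gr\"{u}nwald-Letnikov discretization \ref{frac_norxn}, treating the theorem as a statement about the \emph{form} of the adaptive scheme rather than as an error bound. First I would take the exact backward convolution $\sum_{m=0}^{k}\psi(\gamma,m)\,\delta^{k-m}_{j,l}$ and split the summation range $\{0,1,\dots,k\}$ into three consecutive blocks dictated by Definitions 3.1 and 3.2: the base block $\{0,\dots,a\}$; the graded blocks associated with the intervals $I=[a^{i-1}+i,\,a^{i}]$ for $i=2,\dots,i_{max}$; and a residual tail running from the last sampled window up to $k$. The base block is kept verbatim and reproduces the first sum $\sum_{n=0}^{a}\psi(\gamma,n)\,\delta^{k-n}_{j,l}$; the tail is likewise kept exactly and furnishes the final sum $\sum_{p=m_{max}+i_{max}}^{k}\psi(\gamma,p)\,\delta^{k-p}_{j,l}$.

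The core of the argument is the graded middle blocks. For each $i\ge 2$ I would partition the interval $I$ into windows of width $d=2i-1$ (Definition 3.2), each centered on a median index $m_i$. A short index computation then identifies these medians with the set $M$: substituting $\eta=1$ into $m_i=a^{i-1}+(2i-1)\eta-i+1$ returns the left endpoint $a^{i-1}+i$ of $I$, while successive values of $\eta$ differ by the stride $2i-1$, so that the windows $[m_i-(i-1),\,m_i+(i-1)]$ each contain exactly $2i-1$ points and march across $I$. Invoking the smoothness of $u$ (and hence of $\delta^{k-m}_{j,l}$) together with the slow variation of $|\psi(\gamma,m)|$ for large $m$ --- the representative-median principle described before Definition 3.1 --- I would approximate the $2i-1$ summands in each window by $(2i-1)$ times the value at its median. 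Summing over windows and then over $i$ gives the double sum $\sum_{i=2}^{i_{max}}\sum_{m_i}(2i-1)\,\psi(\gamma,m_i)\,\delta^{k-m_i}_{j,l}$, and restoring the prefactor $\alpha\,\Delta_t^{\gamma-1}/\Delta_x^2$ assembles \ref{eq:adapttime}.

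The main obstacle I anticipate is the index bookkeeping at the block boundaries, not any analytic step. I would need to verify that the three blocks cover $\{0,\dots,k\}$ without gaps or double counting: in particular that the first window of interval $i$, whose left edge is $a^{i-1}+1$, abuts the coverage of interval $i-1$, and that the offsets ``$+i$'' appearing in the endpoints of $I$ are exactly what reconciles the median-indexed description of $I$ with the true ranges of points that the windows sweep. The second delicate point is the final interval $I_{max}$: since $k$ need not fall on a window boundary, $I_{max}$ may be only partially sampled, so I must take $m_{max}$ to be the largest median index actually used --- whence the bound $|m_{max}|\le|k-i_{max}|$ --- and let the tail sum absorb the leftover indices exactly. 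I would not attempt to quantify the error introduced by the median collapse, since the statement asserts only the structure of the scheme and the authors defer the stability and accuracy analysis to the companion paper.
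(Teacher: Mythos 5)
Your construction is essentially the paper's own proof: both keep the base interval $[0,a]$ verbatim, identify the sampled points of each interval $I=[a^{i-1}+i,\,a^i]$ as the arithmetic sequence $\nu_\eta=a^{i-1}+i+(\eta-1)(2i-1)=a^{i-1}+(2i-1)\eta-i+1=m_i$ with stride $d=2i-1$, weight each sampled term by the window length $2i-1$, and append the exact tail sum over $[m_{max}+i_{max},k]$ to absorb leftover indices when $k$ does not fall on a window boundary. The only difference is that you make explicit the median-window coverage argument justifying the weight $(2i-1)$ and the block-boundary bookkeeping, which the paper's proof leaves implicit.
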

\begin{proof}The first summation represents the basis interval and the limits of the summation operator imply the contribution of every time point, i.e., $n\in\mathbb{N}_1$. For intervals beyond $a$: any arithmetic sequence defined by a recursive process $\nu_\eta=\nu_{\eta-1} + d$, $\eta\in\mathbb{N}_1$ for some distance $d$, the $\eta^{th}$ value in the sequence can be explicitly calculated as $\nu_\eta=\nu_1+(\eta-1)d$ given knowledge of the sequence's starting value $\nu_1$. For the set $\zeta$ this implies that $\nu_1=a^{i-1}+i$ and $d=2i-1$ for a given $i$. This then yields $\nu_\eta=a^{i-1}+i+(\eta-1)(2i-1)=a^{i-1}+(2i-1)\eta-i+1\mathrel{\mathop:}= m_i$ as required. The outer summation collects the summations of all intervals that belong to the set $\zeta$.  The last summation on the interval $[m_{max}+i_{max},k]$ ensures that the final point(s) of the backwards summation are still incorporated into the computation even if the arbitrarily chosen value of $a$ does not generate a final increment length that ends on $k$. 
\end{proof}

Note that $D$ is not explicitly needed for computing equation \ref{eq:adapttime} because the distances $d$ are implicitly taken into account by $\zeta$. Using the median value of each increment avoids the need for interpolation between time points. The implementation of the adaptive memory method described here is necessarily limited to smooth functions due to the assumption that the neighbors of the median values used in the summation do not vary much over the increment being considered. This method essentially allows for a contribution by all previous time points to the current calculation, yet reduces computational times by minimizing the total number of calculations. 
\begin{figure}
\begin{center}
\includegraphics[width=4in]{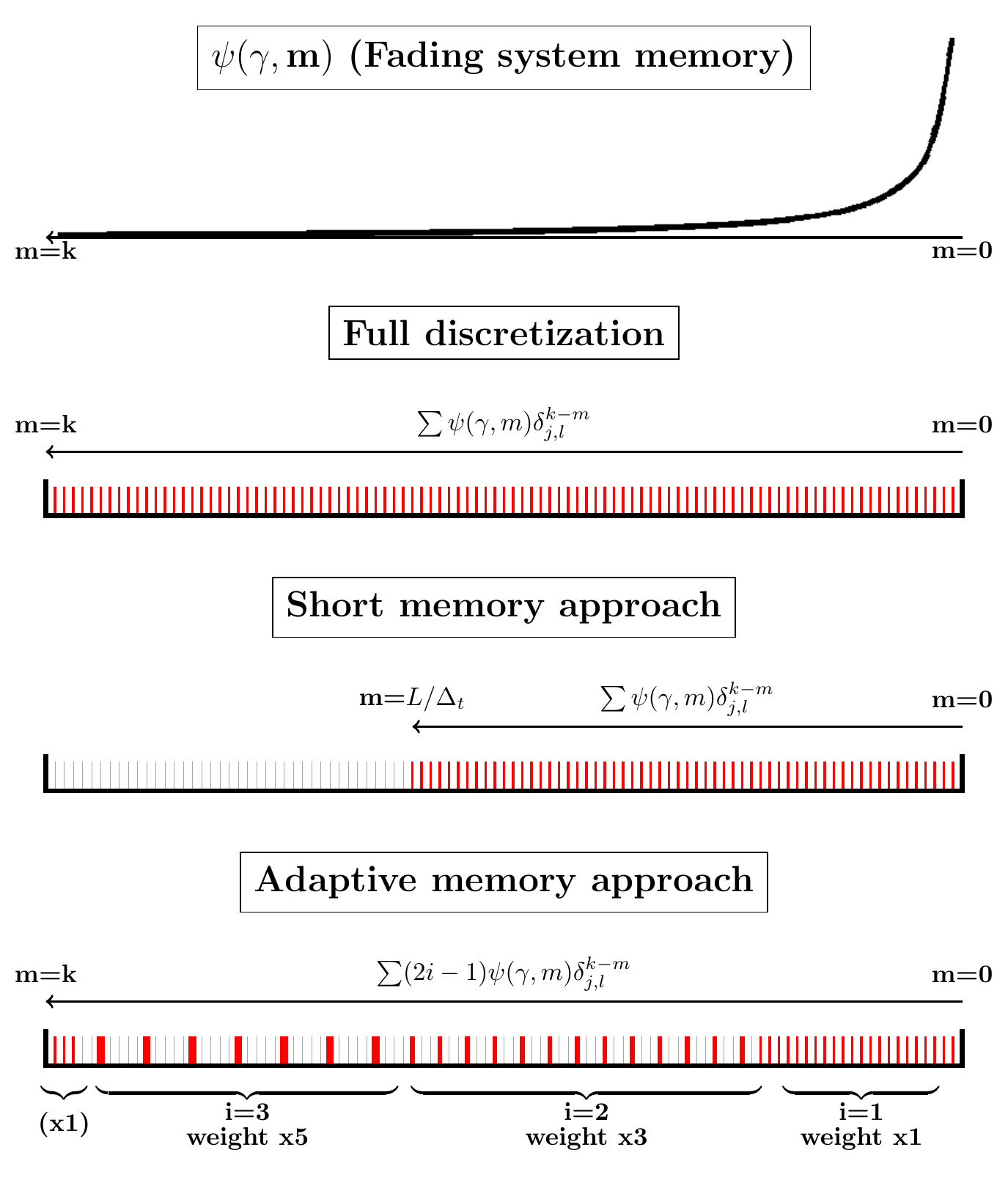}    
\caption{Short memory and adaptive memory methods for estimating the the Gr\"{u}nwald-Letnikov discretization. Both approximations rely on the sharply decreasing function $\left|\psi(\gamma,m)\right|$ as $m$ increases to omit points from the backwards summation. While short memory defines a sharp cut off of points, adaptive memory provides a weighted sampling of points for the entire history of the system. Points included in the computation by each method are highlighted in red. The shape of $\psi$ is different for $\gamma<1$ and $\gamma>1$, but the shape of $\left|\psi\right|$ remains a monotonically decreasing function (as $m$ increases) for both cases, and remains consistent with the principle that more recent time points contribute (whether positively or negatively) more to the solution at the next time step, than time points further back in the history of the system. See text for details.} 
\label{adaptivetime}
\end{center}
\end{figure}

\subsection{Comparison to short memory methods}
The results of using various $L$ (for short memory) and interval steps $a$ (for adaptive memory) are shown in Fig.~\ref{shortvsadaptive}. Increasing the values of $L$ and $a$ resulted in a decrease in the error of the estimated results but at the cost of increased computation times.  Conversely, decreasing the values of $L$ and $a$ resulted in a decrease in computation times, but at the expense of accuracy.  In all cases however, the adaptive memory method had a significantly lower error for the same simulation time, and also reached a clear asymptotic minimum error much faster than the minimum error achieved by the short memory method. In these simulations, $\alpha=1$, $\beta=0$, $\Delta_t=1$, $\Delta_x=10$, using a 20 x 20 grid, and ran for $t=1500$ where $U_{10,10}^0=10$. The error for the `short memory' method increased comparatively quickly and worsened as $\gamma\rightarrow1$. This was due to the fact that the evolution of the solution was initially quite fast at times near $t=0$, which were the first time points cut by the `short memory' algorithm. 

\begin{figure}
\begin{center}
\includegraphics[width=4in]{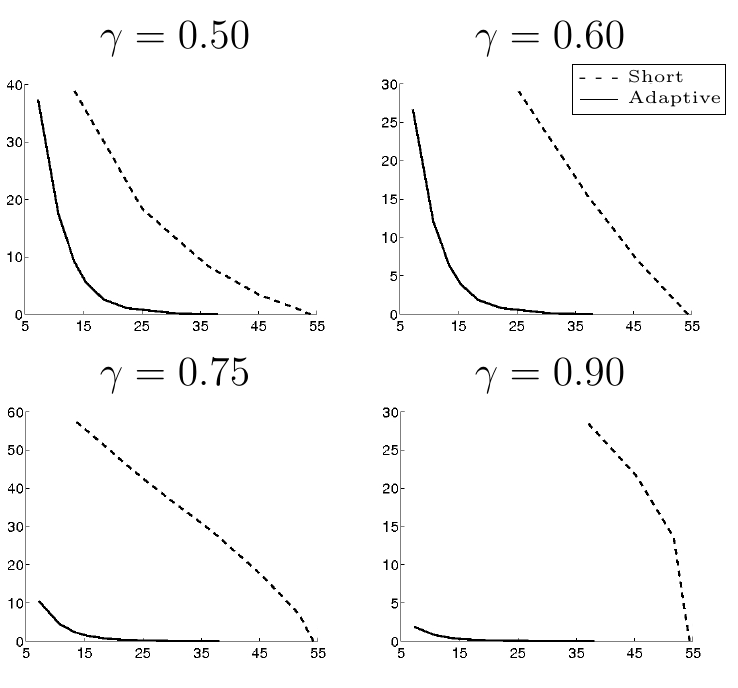}    
\caption{Comparison of the error between adaptive memory and the short memory as a function of the calculation time (x-axis: computation run time in seconds) expressed as a percentage error relative to the computed value for the full non-shortened approach (y-axis). Four different values of $\gamma$ are shown.} 
\label{shortvsadaptive}
\end{center}
\end{figure}


\section{Adaptive time step as a power law}
\label{fracminmem}
While computationally  efficient and intuitively straightforward, the major drawback to the adaptive memory method computed as an arithmetic sequence, is the necessary amount of allocated memory. The backwards time summation grid changes with every step such that every point in the history needs to be retained and stored during the course of a simulation. For high dimensional diffusion this limits the practical applicability of the method. For example, if one were to solve the three dimensional diffusion equation on just a 100 x 100 x 100 set of grid points, that would require the storage of 1,000,000 data points per time step, which over the course of a long simulation would overwhelm the memory capacity of most computers in addition to greatly slowing simulation times. The same memory issues arise when considering another popular approach used for solving the fractional diffusion equation, discussed in \cite{podlubny2009matrix}, that uses an implicit matrix solution to simultaneously solve all points in time using a triangular strip matrix formulation. This later approach is very powerful but does not take advantage of any short memory algorithms. 
 
In this section we improve on our results and develop a framework that uses a power law-based rule for eliminating past history points without sacrificing numerical accuracy. While the adaptive memory algorithm can be used with various distributions other than an arithmetic sequence, our motivation for choosing a power law-based distribution is that it matches the natural power law dynamics of the underlying mathematics associated with the time-fractional derivative, and it is also easy to model and implement. We therefore considered it a good starting point for improvements on the original arithmetic sequence version of the adaptive memory algorithm. We note however that other distributions can be combined with an adaptive step approach that would result in varying degrees of efficiency and accuracy. This will depend on how well they match the dynamics associated with time-fractional diffusion. In this section though we only consider a power law-based rule.

An adaptive memory approach applied to a power law distribution, in combination with a linked-list based method, minimizes the storage of the past history of the function. This results in decreasing the amount of total memory allocated to run a simulation of $N$ time steps from $\mathcal O(N)$ to $\mathcal O(log_2(N))$, resulting in a tremendous memory savings.
Given a simulation of $N$ time steps and number of grid points $X/\Delta_x$, where $X$ is the grid width, storing the entire past history would require $\frac{X}{\Delta_x}N$ points to be stored, which grows linearly with $N$. In contrast, an adaptive mesh with a power law growth in spacing (in this case $1,2,4 ...$), results in $\frac{X}{\Delta_x}log_2(N)$ points being stored in memory, which grows with the $log$ of the number of points $N$. The advantage of using such a power law scaling is that one can {\it a priori} calculate memory points which will not be used at every time step of the simulation and de-allocate the memory assigned to storing those time points. With the adaptive memory method, past points needed for each subsequent time step change with each iteration, necessitating that all time points be stored for future calculations. The use of a self similar power law allows the elimination of points that will not be needed at any future time in the calculation. A comparison of the implementation of this method with the full implementation using every time point, is shown in Figure \ref{Minmemtime}. 

\begin{figure}
\begin{center}
\includegraphics[width=4in]{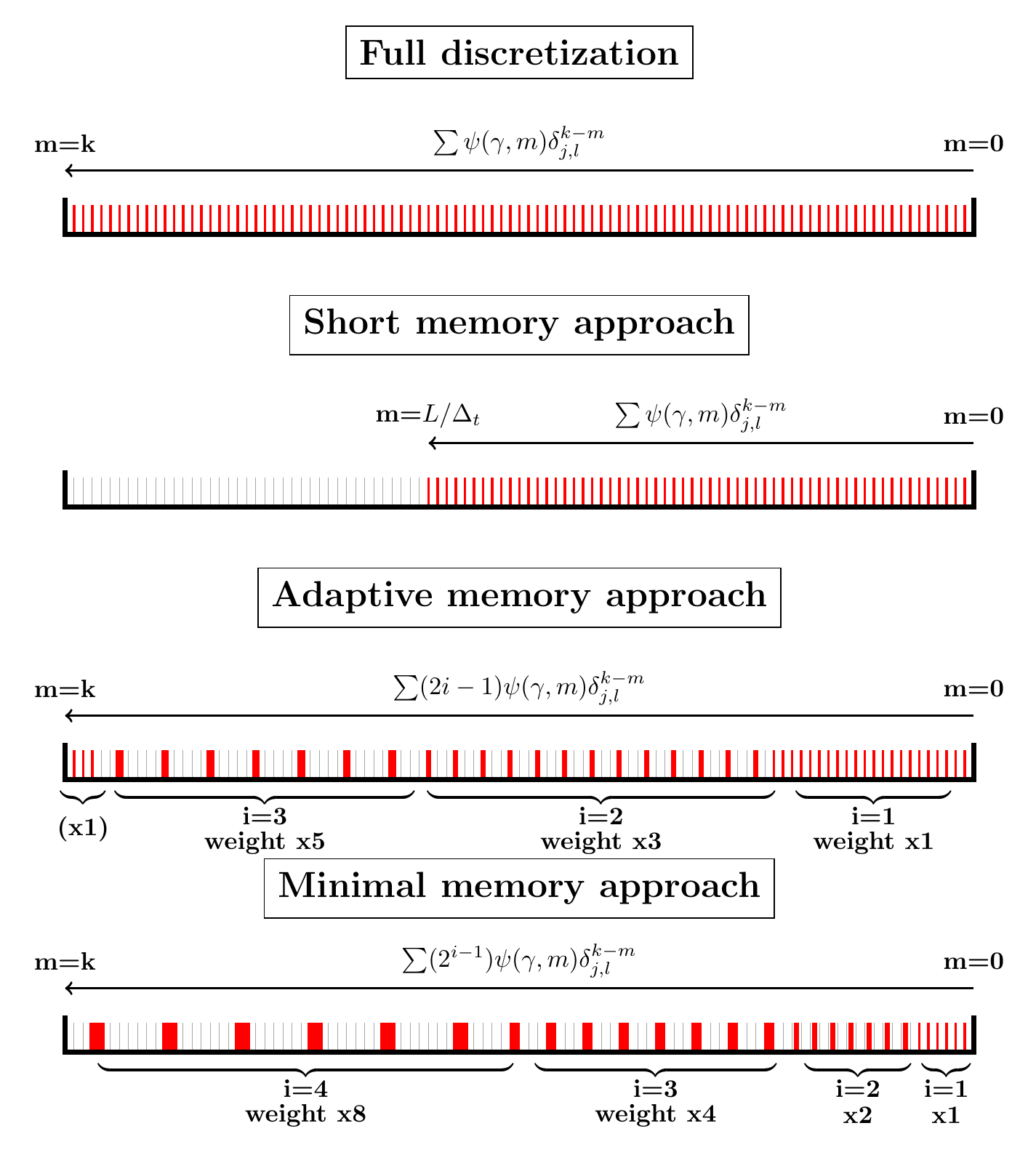}    
\caption{\footnotesize Minimal memory implementation} 
\label{Minmemtime}
\end{center}
\end{figure}

\subsection{Set theoretic implementation}
\label{MinMemMath}
In one spatial dimension, an FTCS discretization of the fractional diffusion equation on a grid $u_{j}^i$ where $i=t/\Delta_t,j=x_1/\Delta_{x}$, can be written as 

\begin{equation}
\label{eq:frac_minmem}
\frac{u^{i+1}_{j}-u^{i}_{j}}{\Delta_t}=\alpha\frac{\Delta_t^{\gamma-1}}{\Delta_{x}^2}\sum_{m=0}^i\psi(\gamma,m) \delta^{i-m}_j.
\end{equation}

(\emph{c.f.} equations \ref{eq:ftcs} to \ref{eq:deltasimp} above and \cite{press1992numerical}), where

\begin{equation}
 \delta^i_j=\left(u_{j+1}^{i}-2u_{j}^{i}+u_{j-1}^{i}\right)
\end{equation}

\begin{definition} Define a well-ordered set $U$ in time consisting of elements $u^i_j$ ordered by the point in time $i=t/\Delta_t$ at which that grid point occurred. The least elements in this set are then the points where $u^0_j$, and the greatest are the points $u^k_j$, where the current time point is $k=t_{current}/\Delta_t$. For all integers $A$ and $B$, if $A>B$, then $u^A_j>u^B_j$.
\end{definition}

Given the numerical scheme in \ref{eq:frac_minmem}, the set $U$ can be expressed as the recursive algorithm 
\begin{eqnarray}
u^{k+1}_{j}&=&u^{k}_{j}+\alpha\frac{\Delta_t^{\gamma}}{\Delta_{x}^2}\sum_{\{u^i_j\in U\}}\psi(\gamma,k-i)\delta^i_j.
\end{eqnarray}
where $u_j^0$ is the set of initial conditions at $t=0$.

Taking advantage of this result we can state the following lemma for the short memory approach:
\begin{lemma} \label{short_U}
Assume a short memory scheme. The elements $u^i<u^{k-\frac{L}{\Delta_t}}$ in $U$ for a time step $i$ are not used in future computations and can be removed from $U$.
\end{lemma}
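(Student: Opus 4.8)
The plan is to reduce the claim to a simple monotonicity statement about the lower edge of the summation window. First I would rewrite the short memory discretization (eq.~\ref{frac_shortmemory}) in terms of the absolute time index $i=k-m$ rather than the lag $m$. Since the truncated sum runs over $m=0,\dots,\min(L/\Delta_t,k)$, the grid points actually entering the computation of $u^{k+1}_j$ are precisely those $u^i_j$ with $\max(0,\,k-L/\Delta_t)\le i\le k$. Thus the history required at step $k$ is a contiguous window of fixed width $L/\Delta_t$ whose lower boundary sits at index $k-L/\Delta_t$.

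Next I would observe that this lower boundary is nondecreasing in $k$: for any future step $k'\ge k$ the smallest index appearing in the sum for $u^{k'+1}_j$ is $k'-L/\Delta_t\ge k-L/\Delta_t$. Consequently, if a stored element satisfies $u^i_j<u^{k-L/\Delta_t}_j$ --- which, by the well-ordering of $U$ defined above, is exactly the condition $i<k-L/\Delta_t$ --- then $i<k'-L/\Delta_t$ for every $k'\ge k$. Hence $u^i_j$ lies strictly below the active window at every future step and never appears in any subsequent summation.

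Finally I would conclude that, since the recursive update for each $u^{k'+1}_j$ depends only on elements inside its own window, deleting all elements below $u^{k-L/\Delta_t}_j$ from $U$ leaves every future computation identical; therefore these elements may be discarded without affecting the solution.

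I do not expect a genuine obstacle here: the entire content is the observation that the memory window translates forward monotonically and never reaches back, so points that exit the bottom are gone for good. The only point requiring mild care is the boundary bookkeeping --- confirming that the index $k-L/\Delta_t$ itself is \emph{retained} (it is still needed at step $k$), and handling the early-time regime $k<L/\Delta_t$, where the $\min$ in eq.~\ref{frac_shortmemory} leaves the truncation inactive and nothing is yet removable.
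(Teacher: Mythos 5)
Your proposal is correct and follows essentially the same route as the paper: both re-express the truncated summation so that the history entering the update at step $k$ is exactly the set of elements above $u^{k-\frac{L}{\Delta_t}}$, and conclude that anything falling below that threshold is never referenced again. Your version makes explicit the monotonicity of the window's lower edge in $k$ and the early-time/boundary bookkeeping, which the paper's proof leaves implicit, but the underlying argument is the same.
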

The set $U$ can then become a list of sets $U^i$ with only the necessary elements to complete the recursive relation in each step $i$. 
\begin{proof}
The short memory approach shown in Figure \ref{Minmemtime} can be written as 
\begin{eqnarray}
u^{k+1}_{j}&=&u^{k}_{j}+\alpha\frac{\Delta_t^{\gamma}}{\Delta_{x}^2}\sum_{\{u^i_j\in U:u^i_j>u^{k-\frac{L}{\Delta_t}}\}}\psi(\gamma,k-i)\delta^i_j.\
\end{eqnarray}
This recursive relation can be written as 
\begin{eqnarray}
u^{k+1}_{j}&=&u^{k}_{j}+\alpha\frac{\Delta_t^{\gamma}}{\Delta_{x}^2}\sum_{\{u^i_j\in U^k\}}\psi(\gamma,k-i)\delta^i_j.\nonumber \\
U^{k+1}&:=&\{u^i\in U^k:u^i>u^{k-\frac{L}{\Delta_t}}\}+\{u^{k+1}\}
\end{eqnarray}
with $U^0:=\{u^0\}$. As the function evolves, elements in $U$ given by $u^i<u^{k-\frac{L}{\Delta_t}}$ will never be used again and can be removed.
\end{proof} 
Numerically only the set $U^k$ needs to be stored in memory for the current time point $k$ and all points after. As discussed above, by its construction adaptive memory time step as an arithmetic sequence necessitates a shifting of the calculated window of points, and as such the entire history of the system needs to be stored in memory for the calculation at all time points. However, we can construct an adaptive memory as a power law, such that once we know what past history points need to be calculated, all other points in between will never need to be calculated. This then requires only enough memory to store the known and computed past intervals of the systems history, allowing the deallocation and recovery of much of the stored memory. This results in less memory requirements which translates into much faster computations.
\begin{definition} Define a parameter $\eta$ which determines the `reset interval'. This represents the number of points in the past history to store in the current time point set $U^k$ at each weight. \end{definition}
\begin{definition} Define weighting sets $W^i$ with elements $w^i$ ordered in the same manner as the sets $U^i$.  \end{definition}
\noindent \textbf{Algorithm 4.1.} Assume an adaptive memory time step power law scheme. Assume $w^0=1$. When there are more than $\eta$ points in the set $W^i$ of any given weight, define a subset of $W^i$ as the elements in $W^i$ of the given weight. Then from this subset, the weight of the least element is doubled, and the second lowest element is removed from $W^i$ altogether. The time marching scheme is given as follows:
\begin{eqnarray}
u^{k+1}_{j}&=&u^{k}_{j}+\alpha\frac{\Delta_t^{\gamma}}{\Delta_{x}^2}\sum_{\{u^i_j\in U^k\}}\psi(\gamma,k-i)w^i\delta^i_j.\nonumber \\
w^{k+1}&=&1 \nonumber \\
W^{k+1} &:=& \{w^i\in W^k\}+\{w^{k+1}\} \nonumber \\
U^{k+1}&:=&\{u^i\in U^k\}+\{u^{k+1}\} \nonumber \\
\end{eqnarray}
so that when the number $N$ elements of a given weight is $N>\eta$, the algorithm is condensed.

\subsection{Numerical implementation}
From an applied perspective, to keep a well ordered list of points in $U$ we make use of linked lists as the data structure. A linked list is one of the fundamental data structures used in computer programming. It consists of nodes that can contain data and their own data structures, and also pointers from each node to the next and/or previous nodes (Fig. \ref{linkedlists}A). This means one node can be removed from the set and deallocated from memory while the order of the set is maintained. 
In our case each node is an item $u$ of the set $U$ describing a time point necessary for the current time step, with the entire list representing all points $u$ that make up the current iteration $U^k$. Once a time point $u$ is no longer necessary for the simulation, it can be removed (Fig. \ref{linkedlists}B). New computed time points, $u^{k+1}$ are added into the list (Fig. \ref{linkedlists}C). The data structure is initialized as illustrated in Fig. \ref{linkedlists}D. At each time step, a new structure containing $u^{k+1}$ and $w^{k+1}$ is added onto the end of the list. When the elements of a specific weight $\omega$ grow larger than the limit $\eta$, the first two values of the weight $\omega$ are condensed into one value, with the weight doubling and one value being removed from the memory. We show as an example the fourth step in a simulation when $\eta=3$ (Fig. \ref{linkedlists}E).  As this process iterates in time, one has a condensed list of only the time points that will be needed for the rest of the simulation in the list. For example, Fig. \ref{linkedlists}F shows the transition to the $17^{th}$ step of a simulation.

\begin{figure}
\begin{center}
\includegraphics[width=6in]{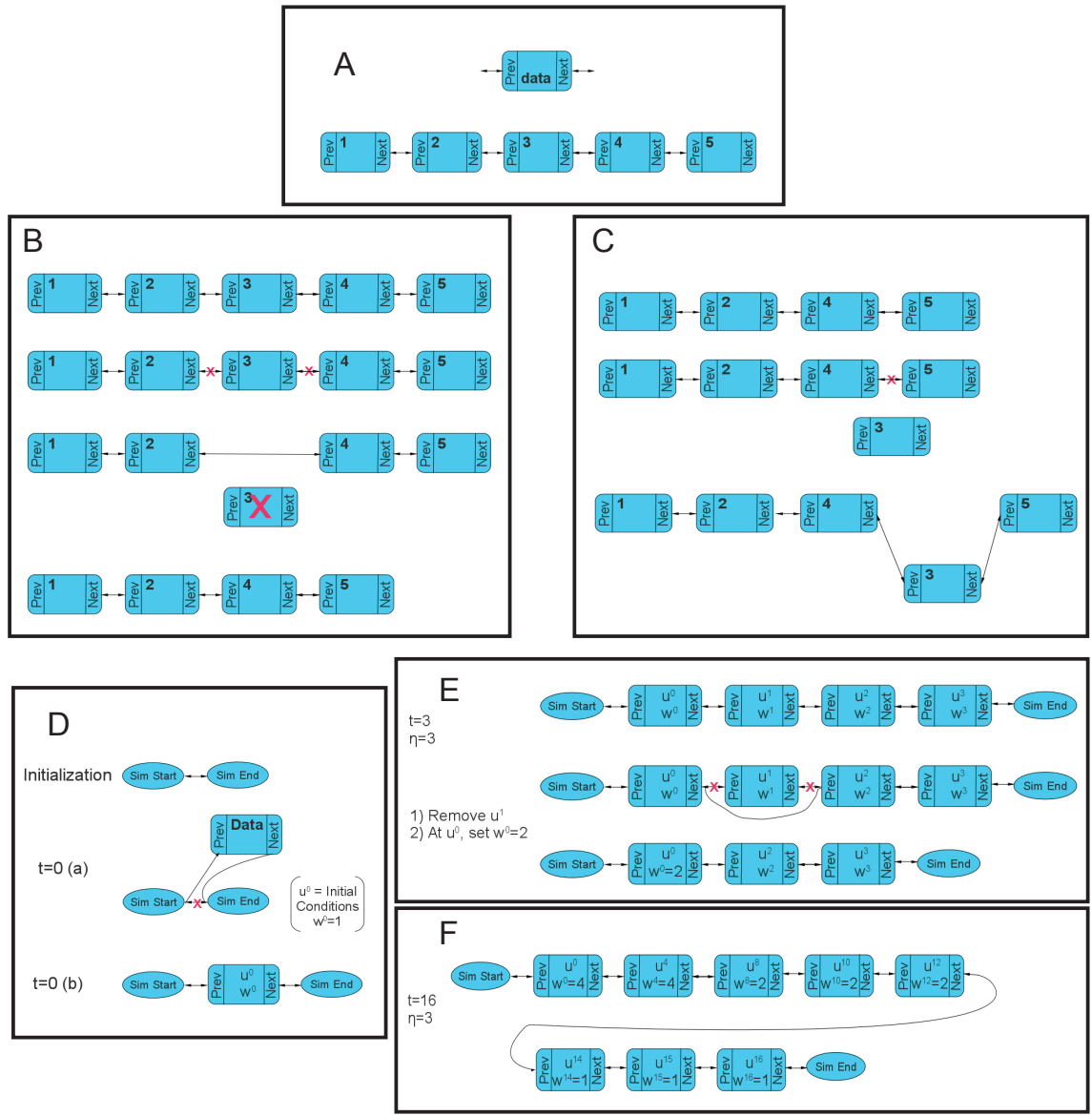}    
\caption{\footnotesize Overview of the use of linked list data structures for the algorithmic numerical implementation of adaptive memory time step as a power law scheme. See the text for details.} 
\label{linkedlists}
\end{center}
\end{figure}

\section{A smart adaptive time step memory algorithm }
\label{fracsmart}
Regular diffusion relies on the assumption of a Markovian process that is not necessarily present in natural systems. One approach to modeling these non-Markovian processes is using the fractional diffusion equation introduced above. Mathematically such methods have been around for decades but it is relatively recently that they have been applied to the natural sciences. Computationally efficient methods for numerically evaluating these equations are a necessity if fractional calculus models are to be applied to modeling real world physical and biological processes. It should be noted that while in this work the simulations were done in the subdiffusive regime for a simple point source, the methods we derive here are directly applicable to more complex sources or superdiffusion ($\gamma>1$). However, complex fast-evolving past histories in the form of a forcing function $(f(u))$ or oscillations generated in a superdiffusion regime will result in much larger errors for both the short and adaptive memory methods. In the case of the adaptive memory method introduced in this paper this is due to its `open-loop'-like algorithm that blindly increases the spacing between points in the summation as the calculation goes further back in time and which does not take into account the speed of evolution of the equation. Adaptive time approaches for regular differential equations often make the integration step size a function of the derivative, i.e., more closely spaced time steps are used when the function is oscillating quickly and cannot be averaged without loss of accuracy, and widely spaced time steps are used where the function is smooth. In the current implementation we have assumed that the past history function $\psi(\gamma,m)\delta^{k-m}_{i,j}$ is smooth. In this last section we extend the original implementation to develop a `smart' `closed-loop'-like algorithm where the step size of the backwards summation is dependent on the derivative of the past history function, i.e., a form of feedback. This optimizes the computational speed of the simulation while reducing the error due to the averaging of time points in the backwards summation, ultimately resulting in low errors for both high frequency forcing functions in the subdiffusion regime and for oscillations intrinsic to the superdiffusion regime. 

\subsection{Approximating the discrete Gr\"{u}nwald-Letnikov series as a continuous time integral }
Our analytical approach for smart adaptive memory makes use of the continuous Gr\"{u}nwald-Letnikov integral. In this form, we can then define a minimum threshold error function based on the derivative that ensures that no period in the history of the system is misrepresented up to the defined error. 

Recalling equation \ref{eq:frac_final}, the discretized form of the fractional diffusion equation, the summation term can be defined as a Riemann sum, which in the limit as $\Delta_t\rightarrow0$ approaches a continuous time integral. The benefits of an integral formulation is that the backwards integration would be separate from a defined time grid for the series, and higher order methods for integrating, such as Simpson's rule, could be used. This would be impossible in discrete time since it is necessary to project and interpolate between points on the grid.

Defining a Riemann sum $S$ over an interval $x_1$ to $x_n$,

\begin{equation}
 S=\sum^n_{i=0}g(y_i)(x_i-x_{i-1})
\end{equation}
there is a correspondence between the discrete summation and the integral
\begin{equation}\nonumber
\label{frac_discreteODE}
\frac{u^{k+1}_{j,l}-u^{k}}{\Delta_t}=\Delta_t^{\gamma-1}\sum_{m=0}^{t/\tau} \psi(\gamma,m)f(u_{k-m}).
\end{equation}
such that,
\begin{eqnarray}\nonumber
 g(y_i)&=&\psi(\gamma,m)f(u_{k-m})\nonumber \\
 i&=&m\nonumber \\
 n&=&t/\tau\nonumber \\ 
 x_i&=&m=0...n \nonumber
\end{eqnarray}
where the width of each segment is 1. As  $\Delta_t\rightarrow0$, this sum gets closer and closer to, and can be approximated by, the continuous time integral
\begin{equation}
 \int_{\tau=0}^t\psi(\gamma,\tau/\Delta_t)f(u(t-\tau)d\tau
\end{equation}
which allows the discretized version to be rewritten in continuous form as 
\begin{equation}
 \frac{u^{k+1}_{j,l}-u^{k}}{\Delta_t}=\Delta_t^{\gamma-1}\int_{\tau=0}^t\psi(\gamma,\tau/\Delta_t)f(u(t-\tau))d\tau
\label{grunlet_cont}
\end{equation}
The function $f(u(t-\tau))$ can be interpolated from the previously calculated values of $u$. The original definition of $\psi(\gamma,m)$ is only defined for $m\in \mathbb{N}_0$, and so needs to extend into the continuous domain. With an analytical continuous function representing $\psi$, one is then free to rediscretize the continuous integral in the most optimal way to solve the problem efficiently.

\subsection{Extension of $\psi(\gamma,m)$ to the positive real domain}

We are interested in a function $\Psi(\gamma,r)$ that is an extension of $\psi(\gamma,m)$ into the positive real domain and is defined such that for all $r\in \mathbb{N}_0$, $\psi(\gamma,r)$ = $\psi(\gamma,m)$. We begin with a basic linear interpolation of $\psi(\gamma,m)$ over non-integer space, and the result is shown in  Fig. \ref{psi_interp}A for various values of $\gamma$. While a linear interpolation provides a reasonable approximation of $\psi$, it is not a smooth function and it is a first order approximation that obviously does not work well for areas of $\psi$ that have a high second derivative (e.g., $r\ll1$, for $\gamma<1$). Since we don't have the ability to increase the number of points we are basing our interpolation on, we consider other options to obtain a more accurate and smoother approximation.
\begin{figure}
\begin{center}
\includegraphics[width=4in]{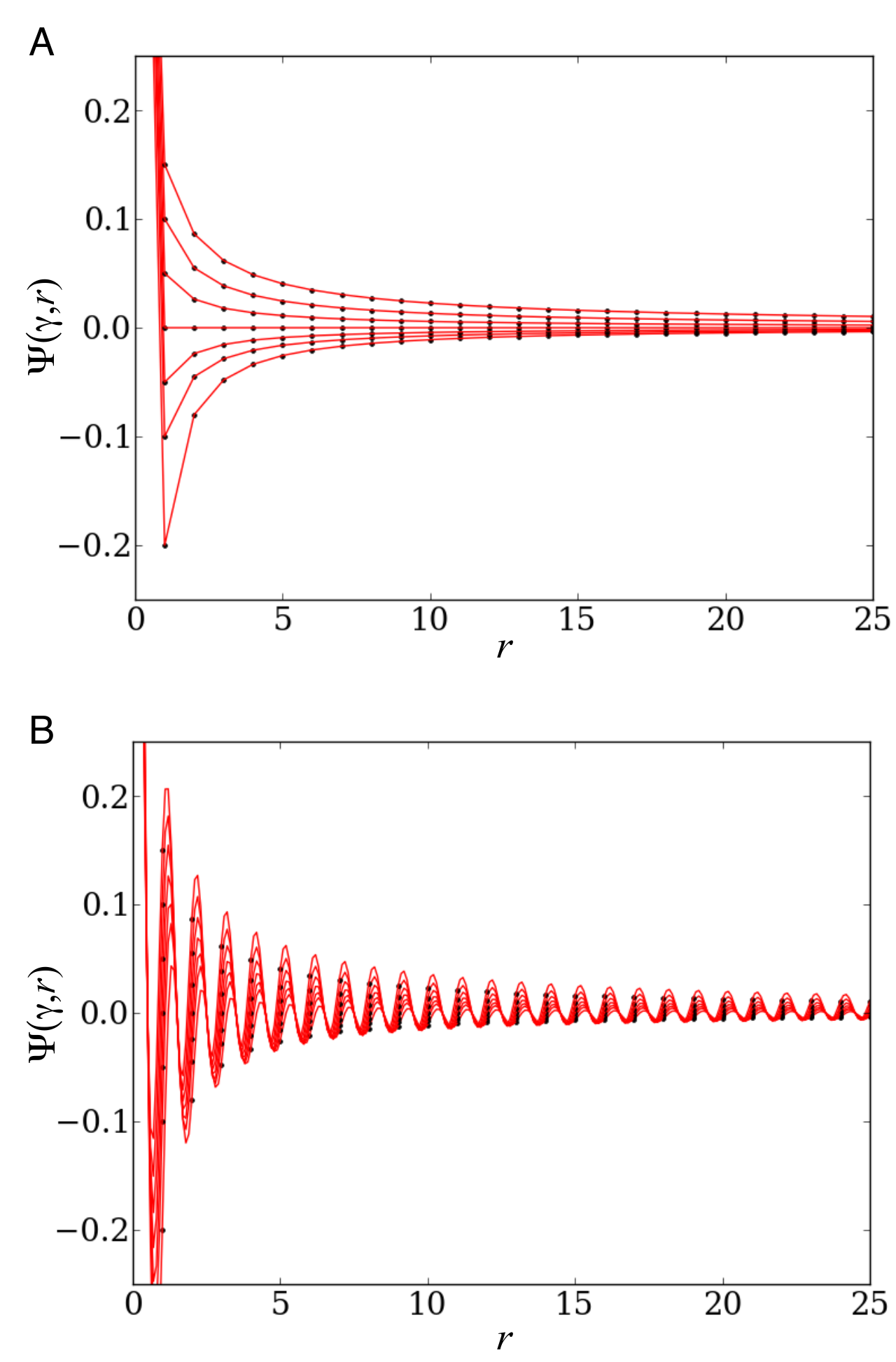}    
\caption{Computing a continuous version of $\psi(\gamma,m)$ for various values of $\gamma$. In all subplots, $\gamma$ = (.85, .90..., 1.10, 1.15) from the bottom trace to the top. Exact function $\psi$ is denoted by discrete points. A. $\Psi(\gamma,r)$ as a linear interpolation of the exact $\psi(\gamma,m)$. B. Extending the definitions of $\psi(\gamma,m)$ to all $r$ in the positive real domain.}
\label{psi_interp}
\end{center}
\end{figure}

Next, we consider simply extending the original definition of $\psi(\gamma,m)$ and expanding it to all points $r$ by using the gamma function to re-express the factorial. With $r$ extending to non-integer space in the positive real domain, and with the $(-1)^r$ term, we get an oscillating complex function. A plot of the real part of the function,
\begin{equation}
 \Psi(\gamma,r)=Real\left\{\frac{(-1)^r\Gamma(2-\gamma)}{\Gamma(r+1)\Gamma(2-\gamma-r)}\right\},
\label{psifunc2}
\end{equation}
shows that this method results in a poor approximation of $\psi$ due to the oscillatory behavior (Fig. \ref{psi_interp}B.)

Another possibility for deriving a continuous function $\Psi$ is to consider a rational polynomial function. One can rewrite the recursive series $\psi(\gamma,m)$ as a truncated approximation using the relation from equation \ref{eq:iterativepsi}:

\begin{eqnarray}
 \psi(\gamma,m)&=&-\psi(\gamma,m-1)\frac{2-\gamma-m}{m}\nonumber \\
 \psi(\gamma,0)&=&1\nonumber
\end{eqnarray}
This can be written in terms of a finite product as
\begin{align}
 & \psi(\gamma,m)=\frac{(-1)^m\Gamma(2-\gamma)}{m!\Gamma(2-\gamma-m)}= \cdots \\&-\psi(\gamma,m-1)\frac{2-\gamma-m}{m}=\prod^m_{n=1}\left[\frac{\gamma+n-2}{n}\right]=\prod^m_{n=1}\left[1+\frac{\gamma-2}{n}\right] \nonumber
\end{align}
Then, using a transform to show that an infinite rational function will intersect all these points, define a rational function so that 

\begin{equation}
\Psi(\gamma,r)=R_{\alpha,\beta}=\frac{P_\alpha}{Q_\beta}
\end{equation}
where 
\begin{equation}
 R_{\alpha,\beta}=\frac{p_0 + p_1r + p_2r^2...p_\alpha r^\alpha}{q_0 + q_1r + q_2r^2...q_\beta r^\beta}. 
\label{rational_poly}
\end{equation}
As $(\alpha,\beta)\rightarrow\infty$, $\Psi(\gamma,r)$ will approach $\psi(\gamma,m)$ for all $r\in \mathbb{N}_0$.

An infinite rational expression, however,would be too costly to compute. One can truncate the expression however, and get a closed-form expression with a very close fit that approaches the analytical recursive $\psi(\gamma,m)$. As $m\rightarrow\infty$, $\psi\rightarrow0$, which implies that $\beta>\alpha$ for this truncation. 

Given the number of coefficients $p_0,...,p_\alpha, q_0,...,q_\beta $ and flexibility in choosing $\alpha$ and $\beta$, there are multiple possible solutions to equation \ref{rational_poly}. But for all cases we consider the basic set of constraints given by the system of equations:

\begin{eqnarray}
 \psi(\gamma,0)&=&\Psi(\gamma,0)\nonumber \\
 \psi(\gamma,1)&=&\Psi(\gamma,1)\nonumber \\
 \psi(\gamma,2)&=&\Psi(\gamma,2)\nonumber \\
&\dots &\nonumber \\
 \psi(\gamma,M)&=&\Psi(\gamma,M)
\end{eqnarray}
where $M$ is an integer. 

The values of $\alpha$ and $\beta$ are also important considerations, since higher order polynomials will yield a more accurate approximation. Although the resulting function $\Psi$ will increase in complexity along with accuracy, a powerful advantage of a $\psi\rightarrow\Psi$ transform that uses a finite rational polynomial function is that we will obtain a continuous version of the recursive function $\psi$, that is a closed-form expression.

No matter the exact method or transform used to obtain the new function $\Psi(\gamma,r)$, once derived, we can then directly insert it into the numerical implementation. We can drop all points in the past history function ($\Psi(\gamma,m)f(u_{k-m}$)) in the regions where the second derivative is below a certain threshold (i.e., where the function is slowly changing), and integrate the function on the resultant mesh using equation \ref{grunlet_cont}, with the substitution of the continuous $\Psi(\gamma,r)$, with $r=\tau/\Delta_t$:

\begin{equation}
 \frac{u^{k+1}_{j,l}-u^{k}}{\Delta_t}=\Delta_t^{\gamma-1}\int_{\tau=0}^t\Psi(\gamma,\tau/\Delta_t)f(u(t-\tau))d\tau
\end{equation}

As discussed before, this smart adaptive step extension to the original algorithm will allow us to minimize errors by using smaller integration step sizes when the past history function is quickly changing due to a fast-evolving external forcing function to the system, or oscillating behavior integral to the dynamics of the system itself. In addition, this approach will be able to take advantage of the large body of existing literature and numerical methods solutions packages for solving continuous equations.

\section*{Acknowledgments}
The authors wish to thank Dr. Richard Magin  and Dr. Igor Podlubny for helpful discussions during the course of this work. This work was supported by NIH NS054736 and ARO 63795EGII.





\bibliographystyle{amsplain}
\bibliography{MacDonaldarXiv2015}

\end{document}